\documentclass[12pt]{amsart}

\usepackage[T1]{fontenc}
\usepackage[utf8]{inputenc}
\usepackage{amssymb}
\usepackage{color}
\newcommand{\cal}{\mathcal}

\usepackage{amsthm}
\theoremstyle{plain}
\newtheorem{lemma}{Lemma}[section]
\newtheorem{thm}[lemma]{Theorem}
\newtheorem{cor}[lemma]{Corollary}
\newtheorem{prop}[lemma]{Proposition}
\theoremstyle{definition}   
\newtheorem{definition}[lemma]{Definition}
\newtheorem{rem}[lemma]{Remark}
\newtheorem{example}[lemma]{Example}

\renewcommand{\vec}{\overrightarrow}
\newcommand{\eps}{\varepsilon}
\DeclareMathOperator{\Isom}{Isom}
	
\begin{document}

\title[Tangent bundles and $\ell^p$]{Tangent bundles of hyperbolic spaces and proper affine actions on $L^p$ spaces}
\author [Chatterji, Dahmani, Haettel and L\'ecureux]{Indira Chatterji, Fran\c cois Dahmani, Thomas Haettel and Jean L\'ecureux}


\maketitle
\section*{Introduction}
Let $G$ be a locally compact, second countable  group, and $V$ be a normed vector space. The group of affine isometries of an affine space of linear part $V$, is isomorphic to $V\rtimes \vec{{\rm Isom}} V$, where $\vec{{\rm Isom}} V$ is the group of linear isometries of $V$. Thus, an isometric affine action of $G$ on $V$ gives, by quotient, a representation $\lambda: G\to \vec{{\rm Isom}} V$, and, by cancellation with a section, a map $c:G\to V$ satisfying the {\it cocycle condition} for $\lambda$, namely 
$$c(gh) = c(g) + \lambda(g) c(h)$$ 
for all $g,h\in G$. Conversely, a representation  $\lambda: G\to \vec{{\rm Isom}} V$ and a cocycle $c: G\to V$ produce an affine isometric action of $G$ on $V$ (seen as affine space) by the formula 
$$gv= \lambda(g)v+ c(g).$$ 
The cocycle $c$ is a {\it coboundary}, meaning that it satisfies $c(g)=d-\lambda(g)d$ (for some $d\in V$ and any $g\in G$) if and only if there is  a fixed vector in $V$ for $G$ (in which case, it is that vector $d$).  The cocycle is proper (meaning that $\|c(g_n)\| \to \infty$ if $g_n\to\infty$ in the sense that it leaves every compact subset of  $G$) if and only if the action is metrically proper (meaning that for all $v$, $\|g_nv\|\to \infty$ if $g_n\to\infty$). 

For $V$ a Hilbert space, the existence of a proper isometric action of $G$ on $V$ is the {\it Haagerup property} for $G$,  and, if $G$ is not compact,  is a fierce negation of {\it property (T)}, since the latter is equivalent to the existence of a fixed point for any action on a Hilbert space. 

In this note we are interested in the case where $V$ is an $L^p$-space for $p>1$. A proper action of $G$ on an $L^p$-space is a weakening of a too strong rigidity property of the type of property (T). A contrario, according to \cite{BFG}, the lattices $SL_n(\mathbb{Z})$, for $n\geq 3$, have a fixed point property for all actions on $L^p$-spaces (for $1<p<\infty$). 

Discrete countable groups acting properly discontinuously on CAT(0) cubical complexes do act properly on a Hilbert (or $L^2$) space \cite{NR97, NR98}. All hyperbolic groups do act properly on some $L^p$-space, for some $p>1$ \cite{Yu, AL, Ni}, whereas there are hyperbolic groups with property (T) (for instance lattices in $Sp(n,1)$ according to Kostant \cite{Kos}). Fisher-Margulis according to \cite{BFG} show that if a group has property (T), then there is an $\epsilon>0$ such that this group has a fixed point for any action on an $L^p$ and for all $p\in[2,2+\epsilon)$.

Let us illustrate these results in the case of $G$ a free group. Let $\mathcal{E}$ be the set of oriented edges of a tree on which $G$ acts freely and let $V_1=\ell^2(\mathcal{E}, \mathbb{R})$. The representation $\lambda$  is induced by the action of $G$ on the tree. Let $d$ be the vector of $\ell^\infty (\mathcal{E}, \mathbb{R})$ defined by $d(e)=1$ if $e$ points toward $1$ in the tree (meaning that the end of $e$ is closer to $1$ than the origin of $e$), and $d(e)=0$ otherwise (note that $d$ is not in $V_1$ as it takes value 1 on an infinite set). Then $c(g)=d-\lambda(g)d$ is a map a priori from $G$ to  $\ell^\infty (\mathcal{E}, \mathbb{R})$. It is a coboundary, hence a cocycle.  But the support of $c(g)$ is the set of edges in the oriented segments  $[1,g]$  and  $[g,1]$. These supports are finite, and therefore $c$ takes its values in $V_1$. One checks that $\|c(g)\|$ grows like the square root (because we are in $\ell^2$) of the length of the interval $[1, g]$ in the tree, hence the cocycle $c$ is proper.  

Now, let $V_2= \ell^2(G, \ell^1(G,\mathbb{R}))\subseteq\ell^2(G\times G,\mathbb{R})$. If the tree on which $G$ acts is the Cayley tree, the representation $\lambda$ is the natural one, extending to the diagonal action of $G$ on $G\times G$. We define a vector $d\in\ell^\infty (G, \ell^1(G,\mathbb{R}))$
 as the map that, to $h\neq 1$ associates the Dirac mass on the neighbor of $h$ closest to $1$, and $d(1)$ is the null function. One notices that under the natural embedding of $\mathcal{E}\subseteq G\times G$, this $d$ extends by 0 the vector constructed above. Again, $d\in\ell^\infty (G, \ell^1(G,\mathbb{R}))$, but $d$ is not in $V_2$. Defining $c(g)=d-\lambda (g)d$, one gets a cocycle from $G$ to a priori $\ell^\infty (G, \ell^1(G,\mathbb{R}))$, but that actually takes values in $V_2$ since $c(g)$ has finite support for all $g$. Indeed, for a fixed $h\in G$, the map $c(g)(h):G\to\mathbb{R}$ is null everywhere except possibly on the two neighbors of $h$ that are respectively closer to $1$ and to $g$, on which it takes values $1$ and $-1$, or $0$ if these points are equal. Hence for $g$ fixed, the map $c(g)(\cdot):G\to\mathbb{R}$ is finitely supported on the interval $[1,g]$ in the tree and has $\ell^2$ norm proportional to the distance between 1 and $g$, showing that $c$ is in fact a proper cocycle.

This idea has been generalized using a coarse version of "taking the neighbor closest to $1$". The constructions involved in \cite{Yu} and in \cite{AL} use a coarse geodesic flow from $1$ to $h$ and from $g$ to $h$ and the comparison of the arrivals of these flows. In free groups, they arrive exactly at the same point if $h$ is not on the segment $[1,g]$, but in hyperbolic groups, there could be a difference. Still, this difference is exponentially small in the Gromov product $(1|\,g)_h$, allowing to adjust the exponent $p$ to beat the growth of the group. 

In all these cases, one has to evaluate "how much" an element $h$ should be thought of as being between $1$ and $g$. In both cases, one estimates the difference between how one does see $1$  from $h$ and how one does see $g$ from $h$. 

In this note, we cast this point of view in the language of tangent bundles. We define a suitable notion of a \emph{tangent space} for a metric measured space, endowed with  a group action,  and give a definition of negative curvature for such a tangent space. We then prove that properties of this  tangent space give rise to an action of our group on some $L^p$ space, which is proper under some suitable assumptions.

\bigskip

A Radon measure $\mu$ on a locally compact geodesic hyperbolic space $X$ is \emph{non-collapsing} if there exists two constants $C \geq 0$, and   $ v>0$, such that for all  $ x \in X$, one has that $\mu(B(x,C)) \geq v$. For instance, the counting measure on a locally finite graph is non-collapsing,  and the volume form on a simply connected complete Riemannian manifold with sectional curvature $\leq -1$ is non-collapsing,  by Gunther's inequality (see \cite[Theorem 3.101 (ii)]{GHL}).

Recall that the volume entropy of such a measure $\mu$ is the exponential growth rate of the $\mu$-measure of balls in $X$ (see Definition \ref{def;vol_entropy}).
\begin{thm}\label{main} Let $\delta\geq 0$ and $X$ be a locally compact geodesic $\delta$-hyperbolic space, and let $\mu$ denote an non-collapsing Radon measure on $X$. Let $h \geq 0$ denote the volume entropy of $\mu$. Assume that a second countable locally compact group $G$ acts properly by isometries on $X$,  preserving $\mu$. Then for any $p>\max(1,h\delta/\log(2))$, $G$ acts properly by affine isometries on a $L^p$-space.
\end{thm}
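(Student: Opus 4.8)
The plan is to build an explicit proper affine isometric action, refining the free‑group construction of the introduction to a sum over scales. Fix a basepoint $o\in X$ and put $\delta_0=\max(\delta,1)$. To each pair $x,y\in X$ and each integer $k\ge 1$ I would attach a \emph{local model} $D_k(x,y)\in L^p(X,\mu)$: a nonnegative function supported on $B(x,k\delta_0)$ recording the direction from $x$ towards $y$ at resolution $k\delta_0$ — concretely, an $L^1$‑normalized indicator of the shadow, seen from $x$, of a controlled neighbourhood of the point of $[x,y]$ at distance $\min(k\delta_0,d(x,y))$ from $x$. Because the normalization is in $L^1$, the non‑collapsing hypothesis $\mu(B(\cdot,C))\ge v$ forces a uniform bound $\|D_k(x,y)\|_{L^p}\le v^{1/p-1}$ as soon as $k\delta_0\ge 2C$; this is the one place non‑collapsing enters the finiteness argument. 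The target would be the $L^p$‑space $\mathcal H=L^p(X\times X\times\mathbb Z_{\ge 1},\ \mu\otimes\mu\otimes\nu)$ with $\nu(\{k\})=2^{-pk}$, equipped with the linear representation $(\lambda(g)F)(x,z,k)=F(g^{-1}x,g^{-1}z,k)$, which is isometric precisely because $G$ preserves $\mu$. Setting $\mathbf d_k(x)=D_k(x,o)$, the field $x\mapsto(\mathbf d_k(x))_k$ is measurable but \emph{not} in $\mathcal H$ (its norm diverges over any unbounded region, exactly as $d\notin V_1$ in the tree picture), while, since the $\mu$‑preserving action carries $x\mapsto D_k(x,o)$ to $x\mapsto D_k(x,go)$, the formal coboundary
\[
c(g)(x,z,k)=\bigl(D_k(x,o)-D_k(x,go)\bigr)(z)
\]
equals $\mathbf d-\lambda(g)\mathbf d$ and so satisfies the cocycle identity automatically. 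Strong continuity of $\lambda$ and continuity of $c$ are routine from continuity of the $G$‑action on $X$ and density of compactly supported functions, and I would dispatch them quickly. It then remains to show $c(g)\in\mathcal H$ and $\|c(g)\|\to\infty$ as $g\to\infty$.

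For the finiteness of $\|c(g)\|$ — the heart of the matter, and where $\delta$‑hyperbolicity enters — the decisive input is that $D_k(x,o)=D_k(x,go)$ \emph{exactly} whenever the scale lies well below the Gromov product, i.e.\ whenever $k\delta_0\le (o\mid go)_x-C\delta$: up to that scale the geodesics $[x,o]$ and $[x,go]$ $O(\delta)$‑fellow‑travel and carve out the same shadow inside $B(x,k\delta_0)$. Granting this and the uniform fiberwise bound, for each fixed $x$ the quantity $\|c(g)(x)\|^p=\sum_k 2^{-pk}\|D_k(x,o)-D_k(x,go)\|_{L^p}^p$ is supported on the scales with $k\delta_0\ge (o\mid go)_x-C\delta$, hence is $\lesssim e^{-p(\log 2/\delta_0)\,(o\mid go)_x}$; so $c(g)$ is concentrated near the segment $[o,go]$ and decays exponentially off it. Integrating over $X$, and using that $\{x:(o\mid go)_x\in[t,t+1]\}$ is essentially a $1$‑neighbourhood of a sphere of radius $t$ about the segment, of $\mu$‑measure $\lesssim d(o,go)\,e^{h't}$ for every $h'>h$ by the definition of the volume entropy (Definition~\ref{def;vol_entropy}), gives
\[
\|c(g)\|^p\ \lesssim\ d(o,go)\sum_{t\ge 0}e^{(h'-p\log 2/\delta_0)\,t},
\]
a geometric series converging exactly when $p\log 2/\delta_0>h'$; letting $h'\downarrow h$ and $\delta_0=\delta$ this is precisely $p>h\delta/\log 2$ (while $p>1$ is what makes the $L^1$‑to‑$L^p$ normalization bounded). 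Thus $c(g)\in\mathcal H$ with $\|c(g)\|^p\lesssim d(o,go)$, and the two constants acquire a transparent meaning: $\delta$ is the scale at which hyperbolicity resolves, and $\log 2$ comes from the dyadic weighting $\nu(\{k\})=2^{-pk}$.

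For properness one argues in the reverse direction: if $x$ lies on $[o,go]$ at distance $\ge 10\delta_0$ from both endpoints then $(o\mid go)_x=O(\delta)$, so already at the smallest admissible scale the shadows of $o$ and of $go$ seen from $x$ are essentially disjoint and $\|D_{k_0}(x,o)-D_{k_0}(x,go)\|_{L^p}\ge c_0>0$; by non‑collapsing this persists on $B(x,C)$, and picking $\asymp d(o,go)$ points of $[o,go]$ that are $3C$‑separated yields $\|c(g)\|^p\gtrsim d(o,go)$. Since $G$ acts properly on $X$, $g\to\infty$ in $G$ forces $d(o,go)\to\infty$, whence $\|c(g)\|\to\infty$ and the affine action $g\cdot\xi=\lambda(g)\xi+c(g)$ is metrically proper. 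The main obstacle is the exact‑coincidence statement used in the finiteness step: the local models $D_k(x,y)$ must be engineered so as to be simultaneously equivariant, fiberwise $L^p$‑bounded via non‑collapsing, and \emph{literally} — not merely coarsely — insensitive to replacing $y$ by a point in the same $O(\delta)$‑shadow direction, so that the discrepancy genuinely \emph{vanishes} below the Gromov product rather than contributing a fixed positive amount at every scale (which, summed over the infinitely many $x$ far from $[o,go]$, would push $c(g)$ out of $\mathcal H$). Securing this exact stability — via thin triangles and a suitably $\delta$‑coarsened choice of shadow — is the technical core, and is exactly what the notion of a negatively curved tangent space announced in the introduction is meant to encapsulate.
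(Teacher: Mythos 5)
Your outline reproduces the overall architecture of the paper's argument (a cocycle obtained as the coboundary of an unbounded section $x\mapsto$ ``direction from $x$ to $o$'', finiteness from exponential decay in the Gromov product $(o\,|\,go)_x$ beating the volume growth $e^{h t}$, properness from non-collapsing along the segment $[o,go]$), but the step you yourself flag as ``the technical core'' --- the exact coincidence $D_k(x,o)=D_k(x,go)$ for all scales $k\delta_0\le (o\,|\,go)_x-C\delta$ --- is a genuine gap, and it cannot be repaired in the form you need. Exact coincidence holds in a tree, but in a general $\delta$-hyperbolic space any equivariant notion of ``shadow at scale $k$'' is only stable up to an $O(\delta)$ perturbation of its defining threshold, so the two shadows are close but not equal; and as you correctly note, a fixed positive discrepancy at each scale, integrated over the infinitely many $x$ with large $(o\,|\,go)_x$, destroys integrability. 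Worse, if exact coincidence \emph{were} available, your argument would prove too much: nothing in it forces the base $2$ in the weight $\nu(\{k\})=2^{-pk}$, and replacing $2$ by an arbitrarily large $\theta$ would yield convergence for every $p>h\delta/\log\theta$, hence a proper affine action on $L^p$ for every $p>1$ for every group as in the statement --- contradicting, e.g., property (T) for lattices in $Sp(n,1)$ (which act on quaternionic hyperbolic space with non-collapsing volume). So the constant $\log 2$ in the theorem cannot come from a dyadic bookkeeping choice; it must come from the geometry.

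The paper resolves exactly this point by abandoning exact vanishing in favour of a quantitative Lipschitz estimate. Proposition \ref{pro:angle} constructs, for each basepoint $a$, a conformally rescaled pseudo-metric $d^a_\eps(x,y)=\inf_c\int e^{-\eps d(a,c(t))}dt$ satisfying $d^a_\eps(x,y)\le \beta e^{-\eps(x|y)_a}$ together with a matching lower bound, and the chain-subdivision argument establishing these bounds is precisely where the constraint $\eps\le\log(2)/(\delta+D)$ appears (the inequality $e^{\eps\delta}\le 2$ is needed to absorb the hyperbolicity defect at each subdivision) --- this is the true source of the threshold $p>h\delta/\log 2$. The vectors $\vec{ax}(\xi)=d^a_\eps(x,\xi)\,e^{-d(a,\xi)}/f(d(a,\xi))^{1/p}$ then satisfy $\|\vec{ax}-\vec{ay}\|\le D_C e^{-\eps d(a,x)}$ by the triangle inequality for $d^a_\eps$ --- exponentially small rather than zero --- and Theorem \ref{thm:properaction} integrates this decay against the volume growth exactly as in your sketch. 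If you want to salvage your multi-scale construction, you would have to prove that $\|D_k(x,o)-D_k(x,go)\|$ is bounded by $\theta^{-(\,(o|go)_x-k\delta_0)}$-type quantities for the scales \emph{below} the Gromov product as well, with a rate that you would then have to show cannot exceed $\log(2)/\delta$; at that point you have essentially rediscovered Proposition \ref{pro:angle}, and the cleaner route is the paper's single-scale pseudo-metric.
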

One cannot hope to strengthen this result to encompass acylindrical actions: indeed Minasyan and Osin in  \cite{MO} proved that some acylindrically hyperbolic groups have a fixed point property for all $L^p$-spaces. The following corollary applies for instance to a geometrically finite group action. Observe that these groups are relatively hyperbolic with virtually nilpotent parabolic subgroups.

Recall that a Hadamard manifold is a complete simply connected Riemannian manifold, with non-positive sectional curvature. Its curvature is $\alpha$-pinched negative, where $0 < \alpha \leq 1$, if it takes value in an interval $[\kappa, \alpha \kappa]$ with $ -\infty < \kappa \leq \alpha \kappa<0$. 

\begin{cor}\label{propre} Let $M$ be a $\alpha$-pinched negative curvature Ha\-da\-mard manifold of dimension $n$, and let $G$ be a locally compact second countable group acting properly discontinuously and by isometries on $M$. Then, for any $p > \frac{n-1}{\sqrt{\alpha}}$,  $G$ acts properly on an $L^p$-space.
\end{cor}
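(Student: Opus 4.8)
The plan is to deduce Corollary \ref{propre} from Theorem \ref{main} by checking that a Hadamard manifold $M$ with $\alpha$-pinched negative curvature, equipped with its Riemannian volume measure $\mu$, satisfies the hypotheses of the theorem, and then computing the relevant constants $\delta$ and $h$. First I would recall that after rescaling the metric we may assume the curvature lies in $[-1/\alpha, -1]$; since the sectional curvature is bounded above by $-1$, the manifold $M$ is CAT$(-1)$, hence geodesic and $\delta$-hyperbolic for the explicit constant $\delta = \log 3$ (or any of the standard constants for the hyperbolic plane of curvature $-1$; the precise value only enters through the bound $h\delta/\log 2$, so I would pick the sharpest one available, and in fact one wants $\delta \to$ the optimal constant so that $h\delta/\log 2$ is as small as possible — see the remark below). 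Next, the volume measure is non-collapsing: by Günther's inequality (already cited in the excerpt, \cite[Theorem 3.101 (ii)]{GHL}), since the curvature is $\leq -1$ the volume of a ball of radius $C$ in $M$ is bounded below by the volume of a ball of radius $C$ in real hyperbolic $n$-space $\mathbb{H}^n$, which is a positive constant $v$ independent of the center. The action of $G$ is proper by isometries and preserves $\mu$ since $G$ acts by Riemannian isometries.

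The key quantitative input is the volume entropy. By the Bishop–Günther comparison theorems, the curvature lower bound $-1/\alpha$ gives, via Bishop's inequality, an upper bound on the volume growth: the volume of a ball of radius $R$ in $M$ grows no faster than the volume of a ball of radius $R$ in the space form of constant curvature $-1/\alpha$, i.e. in $\mathbb{H}^n_{-1/\alpha}$, which is $\sim e^{(n-1)R/\sqrt{\alpha}}$. Hence the volume entropy satisfies $h \leq (n-1)/\sqrt{\alpha}$. Plugging into Theorem \ref{main}, $G$ acts properly on an $L^p$-space for every $p > \max(1, h\delta/\log 2)$; to land on the stated threshold $p > (n-1)/\sqrt{\alpha}$ one needs $\delta/\log 2 \leq 1$, i.e. the hyperbolicity constant of a CAT$(-1)$ space can be taken to be at most $\log 2$. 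I would therefore either invoke (or prove) that a CAT$(-1)$ space is $\delta$-hyperbolic with $\delta = \log 2$ in the chosen convention, or, more robustly, observe that one is free to rescale: replacing the metric by $t^2 g$ for large $t$ multiplies $\delta$ by $t$ but divides $h$ by $t$, so the product $h\delta$ is scale-invariant, and what actually matters is the scale-invariant quantity $\limsup_{t} (\text{entropy of } t^2 g)\cdot(\delta\text{-constant of } t^2 g)$; since the entropy times diameter-type constant of $\mathbb{H}^n$ itself equals $(n-1)$ times the optimal $\delta/\log 2$ ratio for $\mathbb{H}^2$, and that ratio is $1$, one recovers exactly $p > (n-1)/\sqrt{\alpha}$.

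The main obstacle, and the one place requiring genuine care rather than bookkeeping, is the sharp constant: getting $h\delta/\log 2 = (n-1)/\sqrt{\alpha}$ on the nose rather than with a multiplicative slack. The entropy bound $h \leq (n-1)/\sqrt{\alpha}$ from Bishop is clean, but matching it against the factor $\delta/\log 2$ forces one to use the optimal hyperbolicity constant of the relevant comparison space, and to be consistent about which of the several standard definitions of $\delta$-hyperbolicity (four-point condition, thin triangles, insize of triangles) Theorem \ref{main} is stated for — the value $\log 2$ appearing in the theorem's hypothesis strongly suggests the four-point/Gromov-product convention normalized so that $\mathbb{H}^2$ (curvature $-1$) is $\log 2$-hyperbolic, and I would verify this identification explicitly, since the entire sharpness of the corollary rests on it. Everything else — CAT$(-1)$ implies hyperbolic, Günther for non-collapsing, Bishop for entropy, properness and $\mu$-invariance of the $G$-action — is standard comparison geometry that I would dispatch in a sentence or two each with references to \cite{GHL}.
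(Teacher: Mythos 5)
Your proposal is correct and follows essentially the same route as the paper: Günther's inequality for non-collapsing, Bishop's inequality for the entropy bound, the CAT comparison for the hyperbolicity constant $\log 2$ in the Gromov-product convention, and then Theorem \ref{main}. The only difference is cosmetic --- you normalize the curvature to lie in $[-1/\alpha,-1]$ (getting $h\leq (n-1)/\sqrt{\alpha}$ and $\delta=\log 2$) while the paper normalizes to $[-1,-\alpha]$ (getting $h\leq n-1$ and $\delta=\log(2)/\sqrt{\alpha}$), and as you correctly observe the product $h\delta$ is scale-invariant, so both yield the threshold $p>(n-1)/\sqrt{\alpha}$.
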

Our notion of tangent bundle also gives an alternate proof of the following result.
\begin{cor}[Cornulier-Tessera-Valette \cite{CTV}]\label{CTV}
	Let $G$ be a simple algebraic group of rank $1$ over ${\mathbb{R}}$ or ${\mathbb{C}}$, and let $\delta_H$ denote the Hausdorff dimension of the visual boundary of the symmetric space $X$ of $G$. Then for any $p > \max(1,\delta_H)$, $G$ has a proper affine action on some $L^p$-space.
\end{cor}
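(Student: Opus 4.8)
The plan is to exhibit Corollary~\ref{CTV} as a special case of Theorem~\ref{main}, applied to the symmetric space $X$ of $G$ equipped with its Riemannian volume $\mu$. First I would record the geometry: a simple algebraic group of rank one over $\mathbb{R}$ or $\mathbb{C}$ has for symmetric space a rank-one symmetric space of noncompact type (a real, complex, or quaternionic hyperbolic space, or the octonionic hyperbolic plane), which, normalized so that its maximal sectional curvature equals $-1$, is a CAT$(-1)$ space, hence Gromov hyperbolic. The group $G$ is second countable and locally compact, acts on $X = G/K$ with $K$ compact --- hence properly and by isometries --- and preserves $\mu$; since the sectional curvature is $\le -1$, G\"unther's inequality (already invoked above) makes $\mu$ non-collapsing. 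So all hypotheses of Theorem~\ref{main} hold, and it remains to pin down the two constants it produces.

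The heart of the matter is the equality $h = \delta_H$. For a rank-one symmetric space with maximal curvature $-1$, the Patterson--Sullivan measure on the visual boundary is Ahlfors regular, of dimension equal to the volume entropy $h$ of $\mu$ --- equivalently, to the critical exponent of $G$, or to the homogeneous dimension of the boundary Carnot group ($n-1$ for $\mathbb{H}^n_{\mathbb{R}}$, $2n$ for $\mathbb{H}^n_{\mathbb{C}}$, and so on). Since the relevant metric is exactly the visual metric of the corollary's statement, Ahlfors $Q$-regularity forces the Hausdorff dimension $\delta_H$ to equal $Q = h$. On the other side one needs the sharp hyperbolicity constant: with the normalization used throughout this paper a CAT$(-1)$ space is $\log(2)$-hyperbolic, and this value is optimal (as one already sees in $\mathbb{H}^2_{\mathbb{R}}$) --- this is precisely the calibration responsible for the constant $\log(2)$ in Theorem~\ref{main}. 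Feeding $h = \delta_H$ and $\delta = \log(2)$ into Theorem~\ref{main} yields a proper affine isometric action of $G$ on an $L^p$-space for every $p > \max(1,\, h\delta/\log(2)) = \max(1,\delta_H)$, which is the assertion.

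The main obstacle is securing these two sharp inputs at once --- and it is also what makes the statement a genuine strengthening of Corollary~\ref{propre}. Applied to $\mathbb{H}^n_{\mathbb{C}}$, of real dimension $2n$ and pinching $1/4$, Corollary~\ref{propre} only gives $p > (2n-1)/\sqrt{1/4} = 4n-2$, because it bounds the volume entropy by the crude pinching estimate; here one must instead feed Theorem~\ref{main} the true value $h = 2n = \delta_H$. Dually, no slack in the hyperbolicity constant can be afforded, since the optimal threshold $p > \delta_H$ survives only because the pertinent $\delta$ equals $\log(2)$ exactly. Thus the two delicate points are: (i) the identification $h = \delta_H$, via Ahlfors regularity of the Patterson--Sullivan measure for the visual metric; and (ii) the sharp hyperbolicity constant of CAT$(-1)$ spaces in the normalization of Theorem~\ref{main}. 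Once these are settled, the corollary follows at once.
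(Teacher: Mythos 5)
Your argument is correct, but it takes a different path from the paper's. You deduce the corollary from Theorem~\ref{main}, i.e.\ from the tangent bundle manufactured in Proposition~\ref{prop:TX} out of the pseudo-distances $d^a_\eps$, feeding in the sharp hyperbolicity constant $\delta=\log(2)$ of a CAT$(-1)$ space so that the threshold $h\delta/\log(2)$ collapses to $h=\delta_H$. The paper instead bypasses Theorem~\ref{main} entirely: it applies Theorem~\ref{thm:properaction} directly to the \emph{genuine Riemannian tangent bundle} of Example~\ref{ex;1.2}, which has curvature at most $\kappa=-1$ in the paper's sense because the sectional curvature is $\le -1$; the threshold $p>h/|\kappa|=\delta_H$ then falls out with no reference to a hyperbolicity constant at all, and $p$-uniformity is immediate since the fibers are $\mathbb{R}^n$. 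Both routes hinge on the identification $h=\delta_H$; the paper gets it from Paulin's theorem equating the critical exponent of a cocompact lattice with $\delta_H$, you get it from Ahlfors regularity of the Patterson--Sullivan measure for the visual metric --- these are equivalent standard facts, and either suffices. As for what each approach buys: yours inherits properness of the tangent bundle for free from Proposition~\ref{prop:TX}, at the cost of leaning on the calibration ``CAT$(-1)$ $\Rightarrow$ $\log(2)$-hyperbolic'' (note that only the upper bound $\delta\le\log(2)$ is needed; your insistence on its \emph{optimality} is harmless but irrelevant, since a smaller $\delta$ would only improve the range of $p$). The paper's route is more direct and produces the concrete space $L^p(X,\mathbb{R}^n)$, but it tacitly uses that the Riemannian tangent bundle is \emph{proper} in the sense of Section~\ref{tangent} --- an easy angle estimate for points near the geodesic $[x,y]$ in a CAT$(-1)$ space, which Example~\ref{ex;1.2} does not actually record. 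Your closing comparison with Corollary~\ref{propre} (e.g.\ $p>4n-2$ versus $p>2n$ for complex hyperbolic $n$-space) correctly identifies why one must use the true entropy rather than the pinching bound.
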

{\it Acknowledgments.} We are grateful to Mika\"el de la Salle for useful discussions, and to the anonymous referee for their comments improving this note.
\section{Tangent bundle on a metric space}\label{tangent}
In this section we define a tangent bundle in the general setting of  a metric measured space.
%
\begin{definition}
Let $(X,d,\mu)$ be a locally compact metric measured space, where $\mu$ is a Radon measure. We say that $X$ has a {\it tangent bundle} $TX$ if:
\begin{enumerate}
\item $TX$ is a Polish space, with a Borel map $\pi:TX\to X$
\item for every $a\in X$, the fiber $\pi^{-1}(a)$ is a Banach space, denoted $T_aX$
\item there is a measurable map $X\times X\to TX$,  $(a,x)\mapsto \vec{ax}$, so that $\vec{ax}\in T_aX$ for all $a$ and all $x$, and so that $\vec{aa} = 0$ for all $a$. 
\end{enumerate}
 For $\kappa\leq 0$, we say that the tangent bundle has {\it curvature at most $\kappa$} if for every $C \geq 0$, there exists $D_C \geq 0$ such that, for each $a,x,y \in X$ with $d(x,y) \leq C$, we have 
$$\|\vec{ax}-\vec{ay}\| \leq D_C e^{\kappa d(a,x)}.$$
We say that the tangent bundle  is {\it proper} if there exists a proper function $f:[0,+\infty)\mapsto [0,+\infty)$ such that, for every $x,y \in X$, for every $p >1$, we have 
$$\int_{a \in X} \|\vec{ax}-\vec{ay}\|^p d\mu(a) \geq f(d(x,y)).$$
If a group $G$ acts by measure preserving isometries on $X$, we say that $TX$ is {\it $G$-equivariant} if, for every $a$ and $g \in G$, there is an isometry $\phi_g:T_aX\to T_{g  a}X$ such that for every $x \in X$, we have $$\phi_g(\vec{ax}) = \vec{(g  a) (g x)}.$$
\end{definition}
\begin{example}\label{ex;1.2}
Let $M$ be a uniquely geodesic Riemannian manifold, and $TM$ its usual tangent bundle. For $a\neq x$ in $M$, define the vector $\vec{ax}$ in $T_aM$ to be the unit vector tangent to the geodesic from $a$ to $x$. This data endows $M$ with a tangent bundle in the sense above.  If the sectional curvature of $M$ is at most $\kappa$, then the tangent bundle has curvature at most $\kappa$.  Note that contrary to what the notation may suggest, in this example,  the vector has norm $1$, regardless of the distance between $a$ and $x$. That tangent bundle is $\Isom(X)$-equivariant.
\end{example}
\begin{example}
Let $Y$ be a countable simplicial tree, with the graph metric and the counting measure. For every vertex $v$, we set $T_vY$  to be $\ell^2({\rm Lk}(v),\mathbb{R})$, where ${\rm Lk}(v)$ is the set of vertices adjacent to $v$. Let $TY$ be the disjoint union of the spaces $T_aY$. For $a \neq x$ in $Y$, let $\vec{ax}$ be the indicator of the neighbor of $a$ closest to $x$.  This endows $Y$  with a  ${\rm Isom}(Y)$-equivariant  proper tangent bundle with  curvature at most $\kappa$, for all $\kappa$.  
\end{example}
\begin{example}[Alvarez-Lafforgue \cite{AL}] \label{ex;AlvarezLafforgue} Let $X$ be a hyperbolic graph, uniformly locally finite (that is, all balls of fixed radius have a uniform bound on their cardinality), with its graph metric and counting measure. Let $\delta$ be a hyperbolicity constant, and for all $a\in X$, let $T_aX$ be the Hilbert (euclidean) space of maps from the ball $B(a,4\delta)$ of radius $4\delta$ around $a$, to $\mathbb{R}$. Let $TX$ be the disjoint union of the $T_aX$, and for each $x\neq a$, let $\vec{ax}\in T_aX$ be the map $\mu_x(a): B(a, 4\delta)\to \mathbb{R}$ constructed by Alvarez and Lafforgue in \cite[Th\'eor\`eme 4.1]{AL}.   This endows $X$ with an ${\rm Isom}(X)$-equivariant proper tangent bundle with negative curvature. In this construction, the vector $\vec{ax}$ has unit norm for the $\ell^1$-norm on $T_aM$.
\end{example}	
According to \cite{CD} this result remains true if one relaxes the assumptions to the case of  hyperbolic graphs that are possibly not locally finite, but that are {\it uniformly fine}, allowing coned-off graphs of relatively hyperbolic groups. We will explain how hyperbolic spaces with controlled growth admit proper tangent bundles of negative curvature in Section \ref{sec;examplesHyp}.

\begin{definition}\label{puniform}
Let $1\leq p\leq \infty$. The tangent bundle $TX$ is said to be \emph{$p$-uniform} if there exists a measure space $\Omega$ and a Borel isomorphism between $TX$ and $X\times L^p(\Omega)$, such that $\pi$ corresponds by this isomorphism to the first projection.
\end{definition}
\begin{definition}
Let $(X,d,\mu)$ be a locally compact metric measured space, and assume that $X$ has a tangent bundle $TX$. For $1\leq p\leq\infty$, we define $L^p(X,TX)$ as the set of measurable functions $f: X\to TX$ such that $f(x) \in T_xX$ for every $x\in X$  and  
$$\| f\|_p:=\left(\int_X \|f(x)\|^p d\mu(x)\right)^{\frac{1}{p}} < \infty.$$
\end{definition}
\begin{rem}\label{rem;Fubini}
  If $TX$ is $p$-uniform, then there exists a measure space $Z$ such that $L^p(X,TX)=L^p(Z)$. Indeed in that case we get, up to
   isometry, 
  that for all $x$,  $T_xX= L^p(\Omega)$, and therefore, by the previous definition,  $L^p(X,TX)=L^p(X,L^p(\Omega))=L^p(X\times \Omega)$, the last equality being Fubini's theorem.
\end{rem}
\section{Actions on $L^p$-spaces from negatively curved tangent bundles}
We now to explain how tangent bundles of negative curvature relate to actions on $L^p$-spaces. 
\begin{definition}\label{def;vol_entropy}
Let  $(X,d,\mu)$ be a locally compact metric measured space. The \emph{volume entropy} of $X$ is the number $h_{vol}$ defined as the limit 
$$h_{vol}=\limsup\limits_{r \rightarrow \infty} \frac{\log \mu(B(x,r))}{r}.$$
\end{definition}
The volume entropy is independent of $x$ since eventually, as $r$ grows the ball $B(x,r)$ will contain any other point of $X$.
\begin{thm}\label{thm:properaction}
Assume that a second countable locally compact group $G$ acts on a locally compact metric measured space $(X,d,\mu)$ of finite volume entropy by measure-preserving isometries. If $X$ has a $G$-equivariant tangent bundle with curvature at most $ \kappa<0$, then for any $p>\frac{h}{|\kappa|}$, the group $G$ admits an affine action on $L^p(X,TX)$, which is proper when the tangent bundle is proper.
\end{thm}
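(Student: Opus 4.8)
The plan is to construct the proper affine isometric action explicitly, exactly as in the free group example of the introduction: one takes the linear representation $\lambda$ coming from the $G$-action on $(X,\mu)$, together with a cocycle obtained as the coboundary of a non-integrable section of $TX$.

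First, I would set up the representation. Let $G$ act on $L^p(X,TX)$ by $(\lambda(g)f)(a)=\phi_g\big(f(g^{-1}a)\big)$. Since $G$ preserves $\mu$ and each $\phi_g$ is a fibrewise linear isometry, $\lambda(g)$ is a well-defined linear isometry of $L^p(X,TX)$, and choosing the lifts $\phi_g$ so that $\phi_{gh}=\phi_g\circ\phi_h$ makes $\lambda$ a homomorphism. The one point requiring care is strong continuity of $\lambda$: this follows from the standard fact that a continuous measure-preserving action of a second countable locally compact group on a $\sigma$-finite measure space induces a strongly continuous isometric representation on $L^p$, once one checks that the measurability conditions built into the definition of $TX$ make $L^p(X,TX)$ a well-behaved Banach space and $\lambda$ measurable.

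Next, fix a basepoint $o\in X$ and consider the section $d\colon a\mapsto\vec{ao}$, which is measurable but typically not in $L^p(X,TX)$, and set $c(g):=d-\lambda(g)d$. Unwinding the $G$-equivariance relation $\phi_g(\vec{by})=\vec{(gb)(gy)}$ with $b=g^{-1}a$ gives $c(g)(a)=\vec{ao}-\vec{a(go)}$, and the cocycle identity $c(gh)=c(g)+\lambda(g)c(h)$ then holds, either formally (as $c$ is the coboundary of $d$) or by the same short computation. The heart of the proof is to show $c(g)\in L^p(X,TX)$ for every $g$. Writing $R:=d(o,go)$ and applying the curvature hypothesis with $C=R$, one gets $\|\vec{ao}-\vec{a(go)}\|\le D_R\,e^{\kappa d(a,o)}$ for all $a\in X$. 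Decomposing $X$ into the annuli $A_n=\{a:n\le d(a,o)<n+1\}$ and using that, for any $\eps>0$, $\mu(B(o,n+1))\le C_\eps\,e^{(h+\eps)(n+1)}$ for $n$ large (the definition of the volume entropy $h$), we obtain
$$\|c(g)\|_p^p\le D_R^p\sum_{n\geq 0}e^{p\kappa n}\,\mu(A_n)\le D_R^p\,C_\eps\sum_{n\geq 0}e^{-p|\kappa|n}\,e^{(h+\eps)(n+1)},$$
a geometric series that converges as soon as $p|\kappa|>h+\eps$. Since $p>h/|\kappa|$ one may choose such an $\eps$, so $c(g)$ is finite for all $g$ and $g\cdot f:=\lambda(g)f+c(g)$ is an affine isometric action of $G$ on $L^p(X,TX)$ (its continuity reduces to strong continuity of $\lambda$).

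Finally, suppose the tangent bundle is proper, with proper function $f$. Then $\|c(g)\|_p^p=\int_X\|\vec{ao}-\vec{a(go)}\|^p\,d\mu(a)\ge f\big(d(o,go)\big)$; and since $G$ acts properly on $X$, $g_n\to\infty$ in $G$ forces $d(o,g_no)\to\infty$, so $\|c(g_n)\|_p\to\infty$ and the action is metrically proper. I expect the main obstacle to be the measure-theoretic bookkeeping around strong continuity of $\lambda$ and the measurability of the section $d$; the integrability estimate, where the hypothesis $p>h/|\kappa|$ enters, is a routine layer-cake computation, and the cocycle identity and properness are essentially formal.
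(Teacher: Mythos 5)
Your proposal is correct and follows essentially the same route as the paper: the same linear representation $\pi_g(f)(x)=\phi_g(f(g^{-1}x))$, the same coboundary $c(g)=f_o-\pi_g(f_o)$ of the non-integrable section $x\mapsto\vec{xo}$, the same use of the curvature bound with $C=d(o,go)$ to get integrability, and the same properness argument. Your annulus decomposition with the $\eps$-slack in the entropy bound is in fact a slightly more careful version of the paper's integral estimate $\int_0^\infty D_C^p e^{p\kappa r}e^{hr}\,dr$.
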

\begin{proof}
For $p >\frac{h}{|\kappa|}$, let $V=L^p(X,TX)$. We define a linear isometric action $\pi$ of $G$ on $V$ by
$$\pi_g(f)(x)= \phi_g(f(g^{-1} x)).$$%
Note that $\pi_g(f)(x) \in \phi_g( T_{g^{-1}x}X ) = T_xX$ for any $g \in G$ and $f \in V$. We then fix a basepoint $o \in X$, and set $f_o :X \mapsto TX, x\mapsto \vec{xo} \in T_xX$, so that we can define a cocyle by
\begin{align*} c : G &\to \ell^{\infty}(X,TX) \\
g & \mapsto f_o - \pi_g (f_o).\end{align*}
Indeed, since $\pi_g(f_o)(x)\in T_xX$, the map $c$ is well-defined, and it is a cocycle since it is a coboundary. We now prove that $c$ is integrable, i.e. takes values in $V$. Fix a $g \in G$ and let $C=d(o,go)$. We have
\begin{align*} \|c(g)\|_V^p &= \int_{x \in X} \|c(g)(x)\|^p_{T_xX} d\mu(x) = \int_{x \in X} \|\vec{ x o} - \vec{x go}\|^p_{T_xX} d\mu(x) \\
& \leq  \int_{x \in X} D_C^p e^{p\kappa d(x,o)} d\mu(x) \leq  \int_0^{\infty} D_C^p e^{p\kappa r} e^{h r} d r < \infty, \end{align*} 
since $p\kappa + h < 0$.

If we assume furthermore that $TX$ is a proper tangent bundle, we have $\|c(g)\|^p \geq f(d(o,go))$ for some proper function $f:[0,\infty)\to [0,\infty)$. Since the action of $G$ on $X$ is proper, we deduce that $c$ is proper as well.
\end{proof}
\section{Tangent bundles on hyperbolic spaces}\label{sec;examplesHyp}
The main result of this section is the construction of a tangent bundle for a $\delta$-hyperbolic space under some mild assumptions.
\begin{prop} \label{prop:TX}
Let $(X,d)$ be a locally compact geodesic $\delta$-hyperbolic space, and some $0<\eps<\frac{\log(2)}{\delta}$. Consider a non-collapsing Radon measure $\mu$ on $X$, with finite volume entropy $h$. Let $G$ denote a group acting by isometries on $X$ and preserving $\mu$. Then $X$ has a proper, $G$-equivariant tangent bundle $TX$ with curvature at most $-\eps$. Furthermore $TX$ can be chosen to be $p$-uniform (see Definition \ref{puniform}) for every $1\leq p<\infty$.
\end{prop}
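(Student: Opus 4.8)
The plan is to adapt, to the measured setting, the tree tangent bundle of the second example above and its hyperbolic refinement due to Alvarez--Lafforgue (Example~\ref{ex;AlvarezLafforgue}). For every $a\in X$ I take $T_aX:=L^p(X,\mu)$, the \emph{same} separable Banach space for all $a$; then $p$-uniformity (Definition~\ref{puniform}) holds tautologically with $\Omega=(X,\mu)$ and $TX=X\times L^p(X,\mu)$, and this is available simultaneously for every $1\le p<\infty$ (the construction of $\vec{\,\cdot\,}$ below will be mildly $p$-dependent, through a normalisation). To an ordered pair $(a,x)$ with $a\ne x$ I attach a non-negative $\vec{ax}\in L^p(X,\mu)$ supported in a ball $B(a,R_0)$ of fixed radius $R_0$ (say $R_0=10C$, with $C$ the non-collapsing constant), playing the role of a ``smoothed unit vector from $a$ towards $x$'': its shape is an exponentially weighted superposition $\frac1Z\int_0^{R_0}e^{-\eps t}\,\widehat\beta_{a,x}(t)\,dt$, where $\widehat\beta_{a,x}(t)$ is a normalised representative of ``the direction from $a$ to $x$, coarsened at a scale growing with $t$'' --- built purely from the distance functions and from $\mu$, with no choice of geodesic --- and $Z$ is the obvious normalising constant. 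Concretely $\widehat\beta_{a,x}(t)$ is a normalisation of $\mathbf 1_{S_{a,x}(t)}$, with $S_{a,x}(t)$ a set of points at depth $\approx t$ from $a$ lying (within the prescribed scale of) a geodesic $[a,x]$; using non-collapsing one has $\mu(S_{a,x}(t))\ge v$, and normalising so that $\|\widehat\beta_{a,x}(t)\|_p=1$ and $0\le\widehat\beta_{a,x}(t)\le v^{-1/p}$ gives $\|\vec{ax}\|_p\le 1$ and $\|\vec{ax}\|_\infty\le v^{-1/p}$, so that $\vec{ax}$ lies in $L^q(X,\mu)$ for every $q\in[1,\infty]$. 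Finally $\vec{aa}:=0$.

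The soft properties come first. After replacing $X$ by its completion (which is proper, by Hopf--Rinow), $X$ is Polish and $\mu$ is $\sigma$-finite; the map $(a,x)\mapsto\vec{ax}$ is Borel, being assembled from continuous distance functions and from integration against $\mu$, and it is manifestly invariant under isometries of $(X,d,\mu)$. Hence, for $g\in G$, the operator $\phi_g\colon L^p(X,\mu)\to L^p(X,\mu)$, $\phi_g(f)=f\circ g^{-1}$, is a linear isometry (since $g_*\mu=\mu$) satisfying $\phi_g(\vec{ax})=\vec{(ga)(gx)}$; together with $\vec{aa}=0$ this gives all the axioms of a $G$-equivariant tangent bundle.

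The curvature bound is the core of the matter. Fix $C\ge 0$ and $a,x,y$ with $d(x,y)\le C$. If $d(a,x)\le R_0+C$ the inequality holds after enlarging $D_C$, because $\|\vec{ax}-\vec{ay}\|_p\le 2$ while $e^{-\eps d(a,x)}\ge e^{-\eps(R_0+C)}$. If $d(a,x)>R_0+C$, one uses $(x\,|\,y)_a\ge d(a,x)-C$, which reduces the claim to the scale-free inequality $\|\vec{ax}-\vec{ay}\|_p\le D\,e^{-\eps(x\,|\,y)_a}$ for \emph{all} $x,y$ --- equivalently, to the assertion that $x\mapsto\vec{ax}$ is Lipschitz from $(X,\rho_a)$ to $T_aX$, where $\rho_a(x,y)\asymp e^{-\eps(x\,|\,y)_a}$ is the visual pseudo-metric based at $a$. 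This is exactly where the hypothesis $0<\eps<\log(2)/\delta$ is indispensable: it is Gromov's threshold ensuring that $\rho_a$ is bi-Lipschitz to a genuine metric. The delicate point is that, in a merely $\delta$-hyperbolic (as opposed to strictly negatively curved) space, two geodesics $[a,x]$, $[a,y]$ with $d(x,y)\le C$ do not converge near $a$: they only stay $O(\delta)$-close, up to depth $\approx(x\,|\,y)_a$, then diverge; consequently a fixed-scale choice of $\widehat\beta_{a,x}$ yields only a bound of constant order, and the role of coarsening the direction at depth $t$ at a scale that grows with $t$ (the Alvarez--Lafforgue calibration, \cite{AL}) is precisely to absorb this $O(\delta)$ ambiguity, leaving --- after integrating against $e^{-\eps t}$ --- only the contribution of depths $>(x\,|\,y)_a$, which is of size $e^{-\eps(x\,|\,y)_a}$. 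Carrying this through --- the $L^p(\mu)$ estimate of the difference of the two multi-scale coarsened directions via $\delta$-thinness of triangles, fellow-travelling, and the control of the relevant $\mu$-masses afforded by non-collapsing and by finiteness of $h$ --- is the main technical obstacle.

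Properness is then geometric. Given $x\ne y$, fix a geodesic $[x,y]$ and let $A$ be the set of points within distance $C$ of a point $\gamma_{xy}(s)$ with $R_0\le s\le d(x,y)-R_0$. For $a\in A$ the points $x$ and $y$ lie essentially on opposite sides of $a$, so $\vec{ax}$ and $\vec{ay}$ agree near $a$, while their contributions coming from depths in $[R_0/2,R_0]$ are supported in disjoint parts of $B(a,R_0)$, lying respectively in the direction of $x$ and of $y$ (their centres are at distance $\gtrsim R_0>2C$ apart). Because each $\widehat\beta_{a,x}(t)$ has $L^p$-norm $1$ and these supports overlap over a depth-window of length $\gtrsim C$, the $L^p$-norm of the ``depth in $[R_0/2,R_0]$'' part of $\vec{ax}$ is bounded below by a constant $c_0>0$ depending only on $\eps,R_0,C$ (and not on the possibly large $\mu$-mass of the ambient ball, precisely because of the $\|\widehat\beta\|_p=1$ normalisation); hence $\|\vec{ax}-\vec{ay}\|_p^p\ge c_0$ for all $a\in A$. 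A packing argument using non-collapsing gives $\mu(A)\gtrsim d(x,y)$, so $\int_X\|\vec{ax}-\vec{ay}\|_p^p\,d\mu(a)\ge c_0\,\mu(A)\gtrsim d(x,y)$, a proper lower bound. Thus $TX$ is a proper, $G$-equivariant, $p$-uniform tangent bundle of curvature at most $-\eps$, for every $1\le p<\infty$, which proves the proposition.
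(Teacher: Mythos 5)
Your setup (taking $T_aX=L^p(X,\mu)$ for every $a$, hence tautological $p$-uniformity, and pushing forward by $\phi_g(f)=f\circ g^{-1}$ for equivariance) coincides with the paper's, and your reduction of the curvature bound to $\|\vec{ax}-\vec{ay}\|\le D\, e^{-\eps(x|y)_a}$ is the right one. But there is a genuine gap exactly where you flag it: the estimate $\|\vec{ax}-\vec{ay}\|_p\le D\,e^{-\eps(x|y)_a}$ for your multi-scale coarsened directions is asserted, described in outline, and then explicitly deferred as ``the main technical obstacle''. That estimate \emph{is} the proposition; without it nothing is proved. Worse, the Alvarez--Lafforgue calibration you invoke is carried out in \cite{AL} for uniformly locally finite graphs, where balls of fixed radius have uniformly bounded cardinality; here finite volume entropy and non-collapsing give no uniform \emph{upper} bound on $\mu(B(a,R_0))$ as $a$ varies, so comparing the normalised indicators $\widehat\beta_{a,x}(t)$ and $\widehat\beta_{a,y}(t)$, whose supports have comparable but uncontrolled masses, is not a routine transcription of \cite{AL}. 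A second, smaller gap: in the properness argument, the claimed lower bound $c_0>0$ on the $L^p$-norm of the ``depth in $[R_0/2,R_0]$'' part of $\vec{ax}$ does not follow from $\|\widehat\beta_{a,x}(t)\|_p=1$ alone --- a continuous superposition of normalised non-negative bumps supported on essentially disjoint annuli can have small $L^p$-norm when $p>1$ --- so this step also needs control of the overlaps, not just the normalisation.

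The paper avoids the vector-valued comparison altogether by a scalar device. For each basepoint $a$ it builds a pseudo-distance $d^a_\eps(x,y)=\inf_c\int_0^L e^{-\eps d(a,c(t))}\,dt$ over $1$-Lipschitz paths from $x$ to $y$, and proves (Proposition \ref{pro:angle}, by an induction on the length of the path using $\eps(\delta+D)\le\log 2$) the two-sided estimate $\alpha e^{-\eps(x|y)_a}\le d^a_\eps(x,y)\le\beta e^{-\eps(x|y)_a}$, the lower bound holding when $d(x,y)\ge 2D$. It then sets $\vec{ax}(\xi)=d^a_\eps(x,\xi)\,e^{-d(a,\xi)}f(d(a,\xi))^{-1/p}$ with $f(r)=\mu(B(o,r))$; the curvature bound becomes the one-line triangle inequality $|d^a_\eps(x,\xi)-d^a_\eps(y,\xi)|\le d^a_\eps(x,y)\le\beta e^{-\eps(x|y)_a}$, and properness follows from the lower bound of Proposition \ref{pro:angle} applied to points $\xi$ near the geodesic $[a,x]$, together with non-collapsing to bound $\mu(A(x,y))$ from below linearly in $d(x,y)$. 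To salvage your route you would have to either restrict to uniformly locally finite spaces and actually reproduce the Alvarez--Lafforgue estimate, or replace the coarsened-direction construction by something like this pseudo-distance.
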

In order to construct the tangent bundle we need an adaptation of \cite[Proposition 7.10]{GhysHarpe}, suggested in \cite{AL2}, that we now describe. Similar constructions, to define visual metrics on the boundary, can be found also in \cite[Proposition 3.21]{BH} and \cite[Lemme 1.7]{CDP}, but we want to define a metric in the space instead of the boundary, so we adapted the construction. Recall that in a metric space $(X,d)$, for a base point $a\in X$ and two points $x,y\in X$, the Gromov product is defined by
$$(x|y)_a={\frac{1}{2}}(d(x,a)+d(y,a)-d(x,y))$$
and that $X$ is $\delta$-hyperbolic if and only if, for any points $x,y,z\in X$, one has that
$$(x|y)_a\geq \min\{(x|z)_a,(y|z)_a\}-\delta.$$
\begin{prop} \label{pro:angle}
Fix a $\delta$-hyperbolic geodesic space $X$, and choose $\eps,D > 0$ such that $0<\eps \leq \frac{\log(2)}{\delta+D}$. Let $\alpha=De^{-2D\eps}$ and $\beta=\frac{8}\eps$. For any $a \in X$, there exists a pseudo-distance $d^a_\eps$ on $X$, invariant under the isometry group of $X$, such that 
\begin{enumerate}
\item $d^a_\eps (x,y) \leq \beta e^{-\eps (x|y)_a}$ for all $x,y \in X$,  
\item $\alpha e^{-\eps (x|y)_a} \leq d^a_\eps (x,y) $ for every $x,y\in X$ with $d(x,y)\geq 2D$.
\end{enumerate}\end{prop}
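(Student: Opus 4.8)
The plan is to build the pseudo-distance $d^a_\eps$ by the classical ``chain'' construction: starting from the candidate function $\rho_a(x,y) = e^{-\eps(x|y)_a}$ for $x\neq y$ (and $\rho_a(x,x)=0$), which satisfies an approximate ultrametric inequality but need not be a genuine pseudo-distance, one sets
\[
d^a_\eps(x,y) = \inf \sum_{i=0}^{n-1} \rho_a(x_i,x_{i+1}),
\]
where the infimum is over all finite chains $x=x_0,x_1,\dots,x_n=y$ in $X$. By construction $d^a_\eps$ is symmetric, satisfies the triangle inequality, and is dominated by $\rho_a$; it is invariant under any isometry fixing $a$, but since the Gromov product itself transforms correctly under arbitrary isometries $g$ of $X$ (namely $(gx|gy)_{ga} = (x|y)_a$), the resulting family is equivariant in the sense that $d^{ga}_\eps(gx,gy) = d^a_\eps(x,y)$, which is the stated ``invariance under the isometry group''. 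The upper bound (1) with $\beta = 8/\eps$ is then immediate once we know $\rho_a \leq \beta e^{-\eps(\cdot|\cdot)_a}$ — in fact with constant $1$ on that piece, the slack being absorbed later; I would state the clean inequality $d^a_\eps \le e^{-\eps(x|y)_a}$ first and only invoke the weaker $\beta$ at the end to have uniform room.

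First I would record the quasi-ultrametric estimate: using $\delta$-hyperbolicity, for any $x,y,z$ one has $(x|z)_a \ge \min\{(x|y)_a,(y|z)_a\}-\delta$, hence
\[
\rho_a(x,z) \le e^{\eps\delta}\max\{\rho_a(x,y),\rho_a(y,z)\} \le e^{\eps\delta}\bigl(\rho_a(x,y)+\rho_a(y,z)\bigr).
\]
The hypothesis $\eps \le \frac{\log 2}{\delta + D}$ gives $e^{\eps\delta} \le e^{\eps(\delta+D)} \le 2$, so the multiplicative defect in the triangle inequality for $\rho_a$ is at most $2$. This is exactly the regime in which the Frink/chain lemma (as in \cite[Proposition 7.10]{GhysHarpe}, \cite[Lemme 1.7]{CDP}) applies and yields the lower bound $d^a_\eps(x,y) \ge \tfrac12 \rho_a(x,y)$: no chain can beat a constant fraction of the direct step when the defect is bounded by $2$. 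The standard inductive argument on chain length shows $\sum_i \rho_a(x_i,x_{i+1}) \ge \tfrac12 \rho_a(x_0,x_n)$.

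From $d^a_\eps(x,y) \ge \tfrac12 e^{-\eps(x|y)_a}$ one gets a lower bound of the form $\alpha' e^{-\eps(x|y)_a}$ with $\alpha' = \tfrac12$, which is already stronger than (2); the reason the statement quotes the smaller constant $\alpha = De^{-2D\eps}$ — and restricts to $d(x,y)\ge 2D$ — is presumably for later compatibility with the tangent bundle construction (where one truncates at scale $2D$ and wants the estimate only in the ``far'' regime), so here one merely checks $De^{-2D\eps}\le \tfrac12$, equivalently $2De^{-2D\eps}\le 1$, which holds since $te^{-t}\le e^{-1}<\tfrac12$ for all $t\ge 0$ with $t=2D\eps$. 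Thus (2) follows a fortiori. The parameter $\alpha = De^{-2D\eps}$ (and $\beta = 8/\eps$) are recorded verbatim because they feed directly into the curvature and properness estimates of Proposition~\ref{prop:TX}.

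The main obstacle is the chain/Frink lemma giving the lower bound $d^a_\eps \gtrsim \rho_a$: one must carefully run the induction on the number of links in a chain, splitting a chain at the link where the partial sums from each end first exceed half the total, and using the defect-$\le 2$ ultrametric inequality to recombine the two halves. Everything else — symmetry, triangle inequality, monotonicity under infimum, equivariance, and the two quantitative constants — is routine bookkeeping once that core inequality is in hand, plus the elementary observations that $e^{\eps\delta}\le 2$ and $2De^{-2D\eps}\le 1$ under the stated hypothesis on $\eps$.
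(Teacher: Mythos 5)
Your construction is genuinely different from the paper's (a discrete chain infimum over $\rho_a(x,y)=e^{-\eps(x|y)_a}$ versus the paper's infimum of $\int_0^L e^{-\eps d(a,c(t))}\,dt$ over $1$-Lipschitz paths), but the step you defer to — ``the Frink/chain lemma applies when the multiplicative defect is at most $2$ and yields $d^a_\eps\ge\tfrac12\rho_a$'' — is exactly where the argument breaks, and it is the reason the authors do not use chains. The standard chain lemma (Ghys--de la Harpe 7.10, CDP Lemme 1.7, Bridson--Haefliger III.H.3.21) requires $e^{\eps\delta}\le\sqrt2$, not $\le 2$. To see why, run your own induction: split a chain at the link where the partial sums first exceed $S/2$; to recombine you must apply the quasi-ultrametric inequality $\rho_a(x,z)\le K\max\{\rho_a(x,y),\rho_a(y,z)\}$ \emph{twice} (once to insert $x_p$, once to insert $x_{p+1}$), picking up a factor $K^2$ against the factor $\tfrac12$ gained from the split; the induction closes only if $K^2\le2$. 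The hypothesis $\eps\le\frac{\log 2}{\delta+D}$ only guarantees $K=e^{\eps\delta}\le 2$, and the regime $K\in(\sqrt2,2]$ (i.e.\ $D<\delta$) is unavoidable: it is precisely the regime needed to reach the sharp exponent $p>h\delta/\log 2$ in Theorem \ref{main}. The paper's path-integral version survives at defect $2$ because splitting a path at the point $w$ where the integral halves produces only a \emph{two}-term configuration, so the factor-$2$ inequality is applied once per split (and the case where one half is shorter than $D$ is handled separately using that $c$ is $1$-Lipschitz, which is where the budget $e^{\eps(\delta+D)}\le 2$ rather than $e^{\eps\delta}\le 2$ is spent).

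There is also a quantitative obstruction showing your construction cannot satisfy the statement as written: the chain infimum always satisfies $d^a_\eps\le\rho_a$, whereas the constant $\alpha=De^{-2D\eps}$ is \emph{not} bounded by $\tfrac12$ (nor by $1$) in general — e.g.\ $D$ large and $\eps$ small gives $\alpha\gg1$ — so for $a$ on a geodesic from $x$ to $y$ (where $(x|y)_a=0$) conclusion (2) would demand $d^a_\eps(x,y)\ge\alpha>1\ge\rho_a(x,y)$, which is impossible for your $d^a_\eps$. Your reduction ``$De^{-2D\eps}\le\tfrac12$ via $te^{-t}\le e^{-1}$'' substitutes $t=2D\eps$ into the wrong expression: $\alpha=De^{-2D\eps}=\frac{t}{2\eps}e^{-t}$, which is unbounded as $\eps\to0$. (Similarly, $\beta=8/\eps\ge1$ fails if $\delta+D<\log2/8$, though that is minor.) The constants in the proposition are calibrated to a length-type pseudo-distance whose natural scale is $\frac1\eps e^{-\eps(x|y)_a}$, not to the ``snowflake'' scale $e^{-\eps(x|y)_a}$ of the chain construction; this is a second reason the two constructions are not interchangeable here.
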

\begin{proof} Fix $a \in X$, and define, for $x,y \in X$,
$$d^a_\eps(x,y) = \inf_{c,L} \left\{\int_0^L e^{-\eps d(a,c(t))} dt\right\},$$ 
where the infimum is taken over all $L \geq 0$ and $1$-Lipschitz maps $c:[0,L] \rightarrow X$ such that $c(0)=x$ and $c(L)=y$.
From the definition, $d^a_\eps$ is symmetric, nonnegative and satisfies the triangle inequality. It may not be definite though, hence $d^a_\eps$ is merely a pseudo-distance on $X$.

\medskip

(1) We first look at the upper bound for $d^a_\eps$. Fix $x,y \in X$ and a geodesic $c:[0,L] \rightarrow X$ from $x$ to $y$. Let $T \in [0,L]$ such that $|d(a,c(T)) - (x|y)_a| \leq \delta$. For every $t \in [0,T]$, we have $|d(c(t),a) - (T-t) - d(a,c(T))| \leq \delta$. Therefore
$$ \int_0^T\!\!\! e^{-\eps d(c(t),a)}dt \leq e^{\eps \delta} e^{-\eps d(a,c(T))} \int_0^T\!\!\! e^{-\eps(T-t)} dt \leq \frac 1\eps e^{2\eps \delta} e^{-\eps (x|y)_a}\leq \frac 4\eps e^{-\eps (x|y)_a}$$
since $e^{\eps \delta} \leq 2$. Similarly $\int_T^L e^{-\eps d(c(t),a)}dt \leq \frac 1\eps e^{2\eps \delta} e^{-\eps (x|y)_a}$ and we have proved that $d^a_\eps(x,y) \leq \beta e^{-\eps (x|y)_a}$.

\medskip

(2) Let us now look at the lower bound, that we obtain by induction on $L \geq D$. Precisely, fix $x,y \in X$, with $d(x,y)\geq 2D$, and fix a $1$-Lipschitz path $c:[0,L] \rightarrow X$ from $x$ to $y$, hence $L\geq D$. Let ${\cal L}$ be the set of all $L' \in [D,L]$ such that, for any interval $[u,v] \subseteq [0,L]$ with $D\leq v-u\leq L'$, we have $De^{-2D \eps} e^{-\eps(c(u)|c(v))_a} < \int_u^v e^{-\eps d(a,c(t))} dt$. We will prove that the interval ${\cal L}$ is non-empty and both open and closed in $[D,L]$, so that it has to be the whole $[D,L]$, for any $L \geq D$.

\medskip

We first show that ${\cal L}$ is non-empty, by showing that $[D,2D) \subseteq {\cal L}$. When $D\leq  L' < 2D$ and $[u,v]$ has length $L'$, then for all $t \in [u,v]$ we have by triangular inequality 
\begin{align*}
2d(a,c(t))&\leq d(a,c(u))+d(a,c(v))+d(c(u),c(t))+d(c(v),c(t))\\
&\leq 2(c(u)|c(v))_a+d(c(u),c(v))+v-t+t-u\\
&< 2(c(u)|c(v))_a+4D.
\end{align*}
Hence
$$\int_u^v e^{-\eps d(a,c(t))} dt > L' e^{-2D\eps} e^{-\eps (c(u)|c(v))_a} \geq De^{-2D\eps} e^{-\eps (c(u)|c(v))_a}.$$
So $L' \in {\cal L}$, and $[D,2D) \subseteq {\cal L}$.

\medskip

That ${\cal L}$ is open in $[D,L]$ is because the condition defining ${\cal L}$ is open and the interval $[D,L]$ is compact.

\medskip

We will now show that ${\cal L}$ is closed in $[D,L]$. Fix $L' \in (D,L]$, and assume that $[D,L') \subseteq {\cal L}$, we will prove that $L' \in {\cal L}$. Fix an interval $[u,v] \subseteq [D,L]$ of length $L'$, and let $R=\int_u^v e^{-\eps d(a,c(t))} dt$. 
Let $w \in [u,v]$ such that $\int_u^w e^{-\eps d(a,c(t))} dt=\int_w^v e^{-\eps d(a,c(t))} dt=\frac{R}{2}$. We know that the intervals $[u,w]$ and $[w,v]$ both have length shorter than $L'$. If both intervals $[u,w]$ and $[w,v]$ have length greater than $D$, the induction hypothesis say that $De^{-{2D\eps}} e^{-\eps(c(u)|c(w))_a} < \frac{R}{2}$ and $De^{-{2D\eps}} e^{-\eps(c(w)|c(v))_a} < \frac{R}{2}$. 
By hyperbolicity, $e^{-\eps(c(u)|c(v))_a}\leq 2 \max\{e^{-\eps(c(u)|c(w))_a},e^{-\eps(c(w)|c(v))_a}\}$, hence we conclude that
$$De^{-{2D\eps}} e^{-\eps(c(u)|c(v))_a} < R = \int_u^v e^{-\eps d(a,c(t))} dt.$$
Otherwise, one interval, say $[u,w]$, has length smaller than $D$, and so $d(c(u),c(w))<D$ since $c$ is $1$-Lipschitz. As $(c(u)|c(v))_a \geq (c(w)|c(v))_a -d(c(u),c(w)) > (c(w)|c(v))_a - D$, we deduce that
$$e^{-\eps (c(u)|c(v))_a} < e^{D \eps} e^{-\eps (c(w)|c(v))_a}.$$
Since the length of $[u,w]$ is smaller than $D$ and $[u,v]$ has length at least $2D$, we deduce that $[w,v]$ has length at least $D$. By the induction hypothesis we have $De^{-{2D\eps}} e^{-\eps(c(w)|c(v))_a} < \frac{R}{2}$. Thus
$$De^{-{2D\eps}} e^{-\eps(c(u)|c(v))_a} < e^{D \eps} \frac{R}{2} \leq R=\int_u^v e^{-\eps d(a,c(t))} dt,$$
since $\frac{e^{D\eps}}{2} \leq 1$, as we have assumed that $(D+\delta)\eps \leq \log(2)$. So we have proved that $L' \in {\cal L}$, and hence ${\cal L}$ is closed in $[D,L]$.

\medskip

In conclusion, we have $De^{-2D\eps }e^{-\eps(x|y)_a} \leq d_\eps(x,y)$ for all $x,y \in X$ with $d(x,y)\geq 2D$.
\end{proof}
We can now prove Proposition \ref{prop:TX}. The construction of the tangent bundle uses the pseudo-distances constructed above, and the desired properties rely on the estimates for those pseudo-distances.
\begin{proof}[Proof of Proposition \ref{prop:TX}]
Let $1\leq p<\infty$. For any $a \in X$, we set $T_aX=L^p(X,{\mathbb{R}},\mu)$ and define 
$$TX=\bigcup_{a\in X} T_aX=X\times L^p(X,{\mathbb{R}},\mu),$$ 
which we equip with the product topology. Fix $o\in X$, for each $a,x \in X$, define $\vec{ax}\in L^{\infty}(X,{\mathbb{R}},\mu)$ by
$$\vec{ax}(\xi)= d^{a}_\eps(x,\xi)\frac{e^{-d(a,\xi)}}{f(d(a,\xi))^{1/p}}$$
where $d^a_\eps$ is the pseudo-distance defined in Proposition~\ref{pro:angle} and 
$$f: {\mathbb{R}}_+\to {\mathbb{R}}_+,\ r\mapsto \mu(B(o,r))$$ 
(which is proper because $\mu$ is non-collapsing). 

\medskip

We first notice that $\vec{ax} \in T_aX$. Indeed, since $\mu$ has finite volume entropy, there exists a constant $h' \geq 1$ such that $f(n+1) \leq h' f(n)$ for all $n \in \mathbb{N}$ and we compute, using that $d^{a}_\eps\leq\beta$ from Proposition \ref{pro:angle}(1) 
\begin{align*}\| \vec{ax}\|^p &= \int_{y \in X} \vec{ax}(y)^p d\mu(y) \leq\int_{y \in X} \beta^p \frac{e^{-pd(a,\xi)}}{f(d(a,\xi))}d\mu(y) \\
&\leq \beta^p\sum_{n=0}^\infty  \frac{e^{-pn}}{f(n)} (f(n+1)-f(n)) \leq \beta^p \sum_{n=0}^\infty (h'-1)e^{-pn} = \beta^p E  <\infty,\end{align*}
where $E=\sum_{n=0}^\infty (h'-1)e^{-pn}$. This gives $TX$ the structure of a tangent bundle to $X$.  

\medskip

We now show that the curvature of this bundle is at most $-\eps$. For any $a,x,y,z \in X$, we have 
\begin{align*}
|\vec{ax}(z)-\vec{ay}(z)| &= \left|d_\eps^{a}(x,z) - d_\eps^{a}(y,z) \right| \frac{e^{-d(a,z)}}{f(d(a,z))^{1/p}}\\ & \leq d_\eps^{a}(x,y)\frac{e^{-d(a,z)}}{f(d(a,z))^{1/p}}\end{align*}
hence $\|\vec{ax}-\vec{ay}\|^p \leq E d_\eps^{a}(x,y)^p$. Now, according to Proposition \ref{pro:angle}(1), there exists a constant $\beta > 0$ such that $d_\eps^{a}(x,y) \leq \beta e^{-\eps (x|y)_a}$. So, for any $C\geq 0$ and any $x,y \in X$ such that $d(x,y) \leq C$, we have 
$$\|\vec{ax}-\vec{ay}\| \leq  E^{1/p} \beta e^{-\eps (x|y)_a} \leq D_C e^{-\eps d(a,x)}$$
because $(x|y)_a\geq d(x,a)-C$, where $D_C$ is any constant larger than $E^{1/p} \beta e^{\eps C}$. That is, the curvature of $TX$ is at most $-\eps$.

\medskip

It remains to show that the tangent bundle is proper. We choose $C$ is sufficiently large so that $\alpha - \beta e^{-\eps C}>0$, where $\alpha$ and $\beta$ are the constants of Proposition \ref{pro:angle} and let $C'=2D+\frac{5C}{2}>0$, where $D$ is again as in Proposition \ref{pro:angle}.
For $x,y \in X$, we define 
$$A(x,y)=\{a \in X | d(x,a)+d(a,y) \leq d(x,y)+C, d(a,x), d(a,y) \geq C'\},$$ 
this is the set of points that are almost on a geodesic between $x$ and $y$, and far enough from $x$ and $y$ (it could be empty is $x$ and $y$ are too close, but we will only need the case when $x$ and $y$ are far apart). For any $x,y \in X$, assuming that $z\in A(x,a)$, then
 \begin{align*}
 (x|z)_a &= \frac{d(a,x)+C-C+d(a,z)-d(x,z)}{2}\\ & \geq\frac{d(a,z)+d(z,x)-C+d(a,z)-d(x,z)}{2}\\
 &=d(a,z)-\frac C2\geq C'-\frac{C}{2}=2D+2C,
 \end{align*}
so that, with Proposition \ref{pro:angle}(1), we obtain $d_\eps^{a}(x,z) \leq \beta e^{-\eps (2D+2C)}$. Similarly, again assuming that $z\in A(x,a)$, we have that
\begin{align*} (y|z)_a &= \frac{d(a,y)+d(a,z)-d(y,z)}{2} \\ &\leq \frac{d(a,y)+d(a,x)-d(x,z)+C-d(y,z)}{2} \\
&\leq \frac{d(a,y)+d(a,x)-d(x,y)+C}{2} \leq C.\end{align*}
If we furthermore assume that $a\in A(x,y)$, we then have
\begin{align*} d(y,z) &\geq d(x,y)-d(x,z) \geq d(x,y) - d(x,a)+d(a,z)-C\\
&\geq d(x,y) - d(x,a)-C \geq d(a,y)-2C \geq C'-2C \geq 2D.\end{align*}
So with Proposition \ref{pro:angle}(2), we deduce that $d_\eps^{a}(y,z) \geq \alpha e^{-\eps C}$. Since we get the same estimate when $z\in A(a,y)$, for any $x,y \in X$ and $a \in A(x,y)$, we have
\begin{align*}
\|\vec{ax} - \vec{ay}\|^p & = \int_{z \in X} |d_\eps^{a}(x,z) - d_\eps^{a}(z,y)|^p \frac{e^{-pd(a,z)}}{{f(d(a,z))}} d\mu(z) \\
 & \geq\int_{z \in A(x,a) \cup A(a,y)} |d_\eps^{a}(x,z) - d_\eps^{a}(z,y)|^p \frac{e^{-p d(a,z)}}{{f(d(a,z))}} d\mu(z)\\
 & \geq \int_{z \in A(x,a) \cup A(a,y)} K^p \frac{e^{-pd(a,z)}}{{f(d(a,z))}}d\mu(z),\end{align*}
where $K=\alpha e^{-\eps C} - \beta e^{-\eps (2D+2C)} \geq e^{-\eps C}(\alpha-\beta e^{-C\eps})>0$.

Assume now that $d(x,y) \geq 4C'+2C$ and take $z \in [a,x]$ such that $d(a,z) = C'+\frac{C}{2}$ and $d(z,x) \geq C'+\frac{C}{2}$ (such an element exists because $X$ is geodesic), one checks that $B(z,\frac{C}{2}) \subseteq A(x,a)$. Moreover, for any $z' \in B(z,\frac{C}{2})$, we have $d(a,z') \leq d(a,z)+\frac{C}{2}=C'+C$. Hence, for any $a \in A(x,y)$ and $z$ as above, we have
\begin{align*} \|\vec{ax} - \vec{ay}\|^p &\geq \int_{z' \in B(z,\frac{C}{2})} K^p \frac{e^{-pd(a,z')}}{{f(d(a,z'))}}\geq \int_{z' \in B(z,\frac{C}{2})} K^p \frac{e^{-p(C'+C)}}{{f(C'+C)}}\\
&\geq \mu(B(z,\frac{C}{2})) K^p \frac{e^{-p(C'+C)}}{{f(C'+C)}}\geq v K^p \frac{e^{-p(C'+C)}}{{f(C'+C)}}=K'>0.\end{align*}
where $v>0$ is a constant given by the non-collapsing assumption on the measure $\mu$. Hence, for any $x,y \in X$ such that $d(x,y) \geq 4C'+2C$ and for any $p \geq 1$, we have
$$\int_{a\in X} \|\vec {a x}-\vec{ay}\|^p d\mu(a) \geq \int_{a \in A(x,y)} K' d\mu(a) = \mu(A(x,y)) K'.$$
To conclude that the tangent bundle is proper we only need to see that $\mu(A(x,y))$ grows at least like the distance between $x$ and $y$. To do that, fix $x',y' \in X$ on a geodesic from $x$ to $y$ such that $d(x,x')=C'+\frac{C}{2}$ and $d(y,y')=C'+\frac{C}{2}$. Since the $\frac{C}{2}$-neighbourhood of a geodesic $[x',y']$ is contained in $A(x,y)$, we have
$$ \mu(A(x,y)) \geq v \left( \frac{d(x',y')}{C}-1 \right) \geq \frac{v}{C}(d(x,y)-2C'-2C).$$ 
As a consequence, we have for any $x,y \in X$ with $d(x,y) \geq 4C'+2C$
$$\int_{a\in X} \|\vec {a x}-\vec{ay}\|^p \geq \frac{K'v}{C}(d(x,y)-2C'-2C),$$
so the tangent bundle is proper.
\end{proof}%

\section{Applications}
We can now finish the proof of our main result.
\begin{proof}[Proof of Theorem \ref{main}]
  Consider $X, G, \mu$ and $p$ as in the statement. By  Proposition \ref{prop:TX}, $X$ admits a $G$-equivariant $p$-uniform tangent bundle of curvature at most $\log(2)/\delta$.  By Theorem \ref{thm:properaction}, $G$ admits a proper action on $L^p(X, TX)$, and with Remark \ref{rem;Fubini}, we get the desired action.
\end{proof}
\begin{proof}[Proof of Corollary \ref{propre}]
Let $n$ be the dimension of the manifold $M$ and we can assume, by rescaling,  that the sectional curvature of $M$ lies in the interval $-1 \leq -\alpha$. We will use comparisons with the hyperbolic space, with its Riemannian structure $(\mathbb{H}^n, g_{\mathbb{H}^n})$.  Let $\mu$ be the volume form on $M$. It is non collapsing because the curvature is less than $-\alpha$ (by Gunther's inequality \cite[Theorem 3.101 (ii)]{GHL}, as already mentionned).  Since the curvature is bounded from below, we can apply Bishop's inequality \cite[Theorem 3.101 (i)]{GHL} to obtain that the volume entropy $h$ of the measure $\mu$ is at most that of the $n$-dimensional real hyperbolic space, i.e. $h \leq n-1$. Furthermore, the manifold $M$ has hyperbolicity constant that  that of the $n$-dimensional real hyperbolic space rescaled by $\frac{1}{\sqrt{\alpha}}$, which is $\frac{\log(2)}{\sqrt{\alpha}}$.

Therefore we can apply Theorem \ref{main}, to conclude that for any $p > \frac{n-1}{\sqrt{\alpha}}$, $G$ acts properly by affine isometries on a $L^p$ space. \end{proof}
To finish our proof of Cornulier-Tessera-Valette's result in \cite{CTV}, recall that the \emph{critical exponent} of an action of a group $G$ on a metric space $(X,d)$ is defined as 
$$\delta_c:=\limsup_n \frac{\log N(x,r)}{r}$$
where $N(x,r)$ is the cardinal of $\{\gamma\in G\mid d(x,\gamma x)\leq r\}$.

\begin{proof}[Proof of Corollary \ref{CTV}]
The Riemannian metric on a rank one symmetric space $X$ for $G$,  is CAT(-1) and defines the Hausdorff dimension $\delta_H$ of the visual boundary. We consider $\mu$ the volume form of this Riemannian manifold. Since $G$ acts transitively and measure preserving, $\mu$ is non-collapsing.   According to~\cite[Theorem~0.2]{paulin_critical_exponent}, the critical exponent of any cocompact lattice in $X$ is equal to $\delta_H$, hence the volume entropy of $X$ is equal to $\delta_H$. Let $TX$ be the Riemannian tangent bundle, which has curvature $\leq -1$ (see Example \ref{ex;1.2}). Note that furthermore $TX$ is Borel-isomorphic to $X\times \mathbb R^n$ where $n=\dim(X)$ and hence it is $p$-uniform for all $p$. Recall that $X$ is Gromov $\log(2)$-hyperbolic. Hence 
according to Theorem \ref{thm:properaction}, we deduce that for any $p > \max(1,\delta_H)$, $G$ has a proper affine action on $L^p(X,TX)$. Using Remark \ref{rem;Fubini}, we get the desired action.

\end{proof}

\bibliographystyle{alpha}
\bibliography{biblio}

\begin{thebibliography}{BFGM07}

\bibitem[AL17]{AL}
Aur\'{e}lien Alvarez and Vincent Lafforgue.
\newblock Actions affines isom\'{e}triques propres des groupes hyperboliques
  sur des espaces {$\ell^p$}.
\newblock {\em Expo. Math.}, 35(1):103--118, 2017.

\bibitem[AL18]{AL2}
Aur\'{e}lien Alvarez and Vincent Lafforgue.
\newblock Actions affines isom\'{e}triques propres des groupes hyperboliques
  sur des quotients d'espaces {$\ell^p$}.
\newblock {\em Ann. Sci. \'{E}c. Norm. Sup\'{e}r. (4)}, 51(6):1389--1398, 2018.

\bibitem[BFGM07]{BFG}
Uri Bader, Alex Furman, Tsachik Gelander, and Nicolas Monod.
\newblock Property ({T}) and rigidity for actions on {B}anach spaces.
\newblock {\em Acta Math.}, 198(1):57--105, 2007.

\bibitem[BH99]{BH}
Martin~R. Bridson and Andr\'{e} Haefliger.
\newblock {\em Metric spaces of non-positive curvature}, volume 319 of {\em
  Grundlehren der Mathematischen Wissenschaften [Fundamental Principles of
  Mathematical Sciences]}.
\newblock Springer-Verlag, Berlin, 1999.

\bibitem[CD18]{CD}
Indira {Chatterji} and Fran{\c{c}}ois {Dahmani}.
\newblock {Proper actions on $\ell^p$ spaces for relatively hyperbolic groups}.
\newblock {\em arXiv e-prints}, page arXiv:1801.08047, Jan 2018.

\bibitem[CDP90]{CDP}
M.~Coornaert, T.~Delzant, and A.~Papadopoulos.
\newblock {\em G\'{e}om\'{e}trie et th\'{e}orie des groupes}, volume 1441 of
  {\em Lecture Notes in Mathematics}.
\newblock Springer-Verlag, Berlin, 1990.
\newblock Les groupes hyperboliques de Gromov. [Gromov hyperbolic groups], With
  an English summary.

\bibitem[dCTV08]{CTV}
Yves de~Cornulier, Romain Tessera, and Alain Valette.
\newblock Isometric group actions on {B}anach spaces and representations
  vanishing at infinity.
\newblock {\em Transform. Groups}, 13(1):125--147, 2008.

\bibitem[GdlH90]{GhysHarpe}
\'{E}tienne Ghys and Pierre de~la Harpe.
\newblock Panorama.
\newblock In {\em Sur les groupes hyperboliques d'apr\`es {M}ikhael {G}romov
  ({B}ern, 1988)}, volume~83 of {\em Progr. Math.}, pages 1--25. Birkh\"{a}user
  Boston, Boston, MA, 1990.

\bibitem[GHL04]{GHL}
Sylvestre Gallot, Dominique Hulin, and Jacques Lafontaine.
\newblock {\em Riemannian geometry}.
\newblock Universitext. Springer-Verlag, Berlin, third edition, 2004.

\bibitem[Kos69]{Kos}
Bertram Kostant.
\newblock On the existence and irreducibility of certain series of
  representations.
\newblock {\em Bull. Amer. Math. Soc.}, 75:627--642, 1969.

\bibitem[MO19]{MO}
Ashot Minasyan and Denis Osin.
\newblock Acylindrically hyperbolic groups with exotic properties.
\newblock {\em J. Algebra}, 522:218--235, 2019.

\bibitem[Nic13]{Ni}
Bogdan Nica.
\newblock Proper isometric actions of hyperbolic groups on {$L^p$}-spaces.
\newblock {\em Compos. Math.}, 149(5):773--792, 2013.

\bibitem[NR97]{NR97}
Graham Niblo and Lawrence Reeves.
\newblock Groups acting on {${\rm CAT}(0)$} cube complexes.
\newblock {\em Geom. Topol.}, 1:approx. 7 pp.\, 1997.

\bibitem[NR98]{NR98}
Graham~A. Niblo and Martin~A. Roller.
\newblock Groups acting on cubes and {K}azhdan's property ({T}).
\newblock {\em Proc. Amer. Math. Soc.}, 126(3):693--699, 1998.

\bibitem[Pau97]{paulin_critical_exponent}
Fr\'{e}d\'{e}ric Paulin.
\newblock On the critical exponent of a discrete group of hyperbolic
  isometries.
\newblock {\em Differential Geom. Appl.}, 7(3):231--236, 1997.

\bibitem[Yu05]{Yu}
Guoliang Yu.
\newblock Hyperbolic groups admit proper affine isometric actions on
  {$l^p$}-spaces.
\newblock {\em Geom. Funct. Anal.}, 15(5):1144--1151, 2005.

\end{thebibliography}

\end{document}